\documentclass[12pt,leqno,oneside]{amsart}
\usepackage{mathrsfs,dsfont}
\usepackage{amsmath, amstext, amsthm, amssymb, bbm, comment}
\usepackage{charter}
\usepackage{typearea}
\usepackage{hyperref}
\usepackage{color}

\newtheorem{theorem}{Theorem}
\newtheorem{corollary}{Corollary}
\newtheorem{lemma}{Lemma}
\newtheorem{remark}{Remark}
\newtheorem{example}{Example}

\newcommand{\C}{\mathbb{C}}
\newcommand{\R}{\mathbb{R}}
\newcommand{\Z}{\mathbb{Z}}
\newcommand{\N}{\mathbb{N}}
\newcommand{\D}{\mathbb{D}}
\newcommand{\A}{\mathbb{A}}

\newcommand{\Q}{\Bbb{Q}}

\begin{document}

\title{A Dynamical Fekete-Szeg\H{o} Theorem}
\author{Turgay Bayraktar} 
\author{Mel\.{I}ke Efe} 
\date{\today}
\address{Faculty of Engineering and Natural Sciences, Sabanc{\i} University, \.{I}stanbul, Turkey}
\email{tbayraktar@sabanciuniv.edu}
\email{melikeefe@sabanciuniv.edu}
\thanks{T. Bayraktar is partially supported by T\"{U}B\.{I}TAK grant ARDEB-1001/124F370}

\keywords{Fekete-Szeg\H{o} Theorem, Julia set; canonical height}
\subjclass[2000]{37F50, 11G50, 31A15}

\maketitle

\begin{abstract}
Let $E\subset\C$ be a compact set symmetric with respect to the real axis.
A classical theorem of Fekete-Szeg\H{o} asserts that such a compact set is of
logarithmic capacity at least one if and only if it admits approximation by algebraic integers whose
Galois conjugates lie arbitrarily close to $E$. In this note we prove
a dynamical analogue of this phenomenon. When $\mathrm{cap}(E)=1$,
we also show that the algebraic polynomials arising from the Fekete--Szeg\H{o}
theorem generate filled Julia sets $K_{P_n}$ which converge to the
polynomially convex hull $\mathrm{Pc}(E)$ in the Klimek topology, while their
Brolin measures converge to the equilibrium measure $\mu_E$.
In particular, when $E\subset\R$, this provides a genuine approximation
of $E$ by algebraic filled Julia sets.

As an arithmetic application, we prove that the Rumely height associated
to $E$ arises as a limit of canonical dynamical heights in the sense of Call and Silverman, 
giving a dynamical counterpart to the equidistribution theorems of Bilu and
Rumely. 
\end{abstract}


\section{Introduction}

The interaction between logarithmic potential theory, arithmetic geometry, and
polynomial dynamics has produced striking analogies between extremal problems in
approximation theory and global equidistribution phenomena in arithmetic
dynamics. The purpose of this note is to develop a dynamical counterpart of a
classical theorem of Fekete--Szeg\H{o} \cite{FeketeSzego}, by showing that symmetric compact sets in
the complex plane may be approximated not only by algebraic conjugates, but also
by algebraic dynamical objects, namely filled Julia sets of polynomials
with integer coefficients. Moreover, this
approach yields a dynamical analogue of the equidistribution phenomena of
Bilu \cite{Bilu} and Rumely \cite{RumelyBilu}.

\medskip

Let $E\subset\C$ be a compact set symmetric with respect to the real axis.
A fundamental theorem of Fekete--Szeg\H{o} \cite{FeketeSzego} asserts that
logarithmic capacity $\mathrm{cap}(E)\ge 1$ if and only if for every neighborhood $U\supset E$
there exists a sequence of distinct algebraic integers $\alpha_n$ such that 
$\alpha_n$ and all of its Galois conjugates $\operatorname{Gal}(\alpha_n)\subset U$. Such arithmetic 
approximation phenomena are closely related to the logarithmic potential theory 
and equilibrium measures.  In a celebrated theorem, Bilu \cite{Bilu} showed 
that algebraic points of small Weil height become equidistributed with respect
 to the Haar measure on the unit circle. Later Rumely \cite{RumelyBilu} extended 
 this paradigm to general compact sets: for a symmetric compact set $E$ of 
 capacity one, the equilibrium measure $\mu_E$ arises as the weak limit of 
 Galois orbits of algebraic points whose associated height $h_E$ tends to 
 zero (see \cite[Theorem~1]{RumelyBilu}). Thus, from the arithmetic 
 pointview, $\mu_E$ appears as a canonical limit
distribution of small-height algebraic points relative to $E$.
\medskip

In our recent work \cite{BE}, we studied the dynamics of
\emph{asymptotically minimal polynomials}, a broad class of polynomial sequences
arising in approximation theory (see \S\ref{amp} for definition). Under suitable assumptions, 
one of the main conclusions of
\cite{BE} is that asymptotic extremality forces convergence of
dynamical invariants: Brolin measures of such polynomials converge to the equilibrium measure, and
their filled Julia sets converge in the Klimek topology (see Section \ref{amp} for details).

The goal of the present note is to combine the arithmetic existence
theorem of Fekete--Szeg\H{o} with the extremal-dynamical framework
developed in \cite{BE}. In particular, we show that 
the algebraic integers provided by the Fekete--Szeg\H{o} theorem give rise to 
integral polynomials whose dynamical behavior reflects the potential-theoretic structure of $E$.

\begin{theorem}
\label{Dynamical_FS}
Let $E\subset\C$ be a compact set symmetric with respect to the real axis
and regular for the Dirichlet problem. The following are equivalent:

\begin{enumerate}
\item[(i)] $\mathrm{cap}(E)\ge1$

\item[(ii)] For every open neighborhood $U$ of the polynomially convex hull
$\mathrm{Pc}(E)$ there exists a sequence of monic polynomials
$P_n\in\Z[z]$ with $\deg(P_n)\to\infty$ such that the filled Julia sets
\[
K_{P_n}\subset U
\quad\text{for all sufficiently large $n$}.
\]
\end{enumerate}
\end{theorem}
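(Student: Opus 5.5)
The implication (ii)$\Rightarrow$(i) will be a capacity comparison, and (i)$\Rightarrow$(ii) will build $P_n$ from the minimal polynomials supplied by the Fekete--Szeg\H{o} theorem and then control their escape regions via potential theory.

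\textbf{(ii)$\Rightarrow$(i).} Let $P$ be monic of degree $d\ge2$. Its Green function satisfies $g_{K_P}(z)=\log|z|+o(1)$ at infinity, so $\mathrm{cap}(K_P)=1$. Choose a decreasing sequence of bounded open neighborhoods $U_k$ of $\mathrm{Pc}(E)$ with $\bigcap_k\overline{U_k}=\mathrm{Pc}(E)$. By (ii), for each $k$ there is a monic $P\in\Z[z]$ of degree $\ge2$ with $K_P\subset U_k$. Monotonicity of capacity then gives $\mathrm{cap}(\overline{U_k})\ge1$. Capacity is continuous along decreasing sequences of compacta, so $\mathrm{cap}(\mathrm{Pc}(E))\ge1$. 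Finally $\mathrm{cap}(\mathrm{Pc}(E))=\mathrm{cap}(E)$, since both sets have the same unbounded complementary component.

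\textbf{(i)$\Rightarrow$(ii).} Fix $U$ and shrink it to a bounded open set. Choose an open $V$ with $E\subset V$ and $\overline V\subset U$. By Fekete--Szeg\H{o} there are distinct algebraic integers $\alpha_n$ whose conjugates all lie in $V$. Let $P_n\in\Z[z]$ be the monic minimal polynomial of $\alpha_n$. Then $d_n=\deg P_n\to\infty$, because only finitely many algebraic integers of bounded degree have all conjugates in a bounded set. We must show $K_{P_n}\subset U$ for large $n$, using two regions.

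\emph{Far region.} Take $M$ with $V\subset D(0,M)$. For $|z|\ge R:=2M+2$ we have $|P_n(z)|\ge(|z|-M)^{d_n}\ge 2|z|$, so $\{|z|\ge R\}$ is forward invariant and every point there escapes.

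\emph{Compact shell.} Let $L=\overline{D(0,R)}\setminus U$. It suffices to show $|P_n|\ge R$ on $L$ for large $n$. Here I use regularity: $g_E$ is continuous and vanishes exactly on $\mathrm{Pc}(E)$, so $g_E\ge\delta>0$ on $L$. Let $\nu_n$ be the normalized zero-counting measure of $P_n$. Since $\log|z-w|$ is continuous on $L\times\overline V$,
\[
\tfrac1{d_n}\log|P_n(z)|=\int\log|z-w|\,d\nu_n(w)
\]
converges uniformly on $L$ to $\int\log|z-w|\,d\nu(w)$ along any subsequence with $\nu_n\to\nu$ weakly. If $\nu=\mu_E$, the limit equals $g_E(z)+\log\mathrm{cap}(E)\ge\delta$ on $L$. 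Then $|P_n|\ge e^{d_n\delta/2}\ge R$ on $L$ for large $n$, and $K_{P_n}\subset U$ follows.

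\textbf{Main obstacle.} The hard step is obtaining $\nu_n\to\mu_E$, or at least $\int\log|z-w|\,d\nu>0$ on $L$ for every limit $\nu$. When $\mathrm{cap}(E)=1$, I would take the $\alpha_n$ from the Fekete--Szeg\H{o} construction so that their Rumely heights $h_E(\alpha_n)\to0$. Rumely's equidistribution theorem \cite[Theorem~1]{RumelyBilu} then gives $\nu_n\to\mu_E$. Equivalently, the $P_n$ are asymptotically minimal and the results of \cite{BE} apply.

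For $\mathrm{cap}(E)>1$ I would first try to reduce to capacity one by passing to a symmetric regular compact $E'$ with $\mathrm{cap}(E')=1$ and $\mathrm{Pc}(E')\subset U$. One candidate is $E'=\{g_F\le t\}$ for a suitable small symmetric regular piece $F$ of $E$, with $t$ chosen so the capacity equals one. This construction has not been checked.

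Failing that, I would argue directly with the Fekete--Szeg\H{o} polynomials. Integrality makes the discriminants of the $P_n$ nonzero integers, so their absolute values are at least $1$. That bound should force the energy of any limit $\nu$ to be at least $0$. Combined with $\mathrm{supp}\,\nu\subset\overline V$, I would then try to deduce via the principle of domination that $U^\nu\ge g_E+\log\mathrm{cap}(E)-o(1)>0$ on $L$.
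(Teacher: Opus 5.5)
Your proof of (ii)$\Rightarrow$(i) is correct. So is (i)$\Rightarrow$(ii) when $\mathrm{cap}(E)=1$, provided you choose the $\alpha_n$ with all conjugates in neighborhoods $V_n\subset V$ shrinking to $E$; then $h_E(\alpha_n)=\frac1{d_n}\sum_j g_E(\alpha_{n,j})\le\sup_{V_n}g_E\to0$ by continuity of $g_E$. That part takes a more elementary route than the paper. The paper passes through asymptotic minimality (Lemma~\ref{lem:height-to-supnorm}), then Klimek convergence from \cite[Thm.~1.3]{BE}, and only then uses $\inf_{\C\setminus U}g_E>0$. Your escape radius plus uniform convergence of $\frac1{d_n}\log|P_n|$ on the shell $L$ needs only $\nu_n\to\mu_E$. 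At capacity one, even Rumely's theorem can be replaced by your discriminant bound: any limit $\nu$ is carried by $E$ and has $I(\nu)\le 0=V_E$, which forces $\nu=\mu_E$.

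The genuine gap is the case $\mathrm{cap}(E)>1$, which you leave open, and the discriminant fallback cannot close it. Once $V_E<0$, the bound $I(\nu)\le0$ neither identifies $\nu$ nor forces $\int\log|z-w|\,d\nu(w)>0$ off $U$. For example, take $E=\overline{D(0,\varepsilon)}\cup\overline{D(D,\varepsilon)}$ with $\log\varepsilon=-10$ and $\log D=10.5$, so $\mathrm{cap}(E)\ge e^{1/4}>1$. Let $\nu=0.55\,\sigma_0+0.45\,\sigma_1$, where $\sigma_0,\sigma_1$ are normalized arclength on the two boundary circles. Then $\iint\log|z-w|\,d\nu\,d\nu\approx0.15>0$, yet $\int\log|z-w|\,d\nu(w)\approx-0.78$ just outside $\overline{D(0,\varepsilon)}$, which lies in $L$ for a thin $U$, while $g_E+\log\mathrm{cap}(E)\ge 1/4$ there. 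So the discriminant bound alone gives no route to positivity on $L$.

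Your first idea, reducing to capacity one, is the right one, and it is exactly how the paper treats Case~2: it takes a symmetric regular compact $S\subset E$ with $\mathrm{cap}(S)=1$ and applies Case~1, since $\mathrm{Pc}(S)\subset\mathrm{Pc}(E)\subset U$. You still have to produce such a set; here is one concrete construction.
\begin{enumerate}
\item[(a)] Put $\delta=\inf_{\C\setminus U}g_E>0$. Cover $E$ by finitely many closed disks $\overline{D(c_i,r_i)}$, together with their mirror images, all contained in $\{g_E<\delta\}$. Let $G_\lambda$ be the union of the disks with radii $\lambda r_i$.
\item[(b)] Each $G_\lambda$ is symmetric and regular.
\item[(c)] For $\lambda'<\lambda$ you have $G_\lambda\subset\{g_{G_{\lambda'}}\le\log(\lambda/\lambda')\}$. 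This gives $\mathrm{cap}(G_{\lambda'})\le\mathrm{cap}(G_\lambda)\le(\lambda/\lambda')\,\mathrm{cap}(G_{\lambda'})$, so $\lambda\mapsto\mathrm{cap}(G_\lambda)$ is continuous.
\item[(d)] The capacity tends to $0$ as $\lambda\to0$ and is $\ge\mathrm{cap}(E)>1$ at $\lambda=1$. Hence some $E'=G_{\lambda^*}$ has capacity exactly $1$.
\item[(e)] Sublevel sets of $g_E$ are polynomially convex, so $\mathrm{Pc}(E')\subset\{g_E\le\max_{G_1}g_E\}\subset U$.
\end{enumerate}
Running your capacity-one argument for $E'$ then completes (i)$\Rightarrow$(ii).
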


Under the normalization $\mathrm{cap}(E)=1$, we prove that the minimal
polynomials of algebraic numbers whose Galois orbit sufficiently close to E 
are asymptotically minimal
in the sense of \cite{BE}. As a consequence, we show that the classical
arithmetic approximation theorem of Fekete--Szeg\H{o} admits a dynamical
strengthening, providing a dynamical analogue of the equidistribution
phenomena of Bilu and Rumely.

\begin{theorem}
\label{Arithmetic_BE}
Let $E\subset\C$ be a compact set symmetric with respect to the real axis,
regular for the Dirichlet problem, and assume that $\mathrm{cap}(E)=1$.
Let $\{\alpha_n\}$ be a sequence of distinct algebraic integers and 
$P_n\in\Z[z]$ denote the minimal polynomial of $\alpha_n$.
Assume that
\[
\operatorname{dist}(\operatorname{Gal}(\alpha_n),E)\to 0\ \text{as}\ n\to \infty.
\]
Then
\begin{equation}\label{klimekconv}
\Gamma\!\bigl(K_{P_n},\mathrm{Pc}(E)\bigr)
:=\|g_{P_n}-g_E\|_{L^\infty(\C)}
\longrightarrow 0,
\end{equation}
where $g_{P_n}$ denotes the dynamical Green function of $P_n$ and
$g_E$ is the Green function of $\widehat{\C}\setminus \mathrm{Pc}(E)$ with pole at
$\infty$.
In particular,
\begin{equation}
\omega_{P_n}\xrightarrow[n\to\infty]{w^*}\mu_E,
\end{equation}
where $\omega_{P_n}$ is the Brolin (i.e.\ unique measure of maximal entropy)
measure associated to $P_n$ and $\mu_E$ is the equilibrium measure of $E$.
\end{theorem}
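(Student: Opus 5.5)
The plan is to verify that $\{P_n\}$ is asymptotically minimal for $E$ in the sense of \cite{BE}, and then invoke the main convergence theorem of \cite{BE}. That theorem states that for a regular compact $E$ and an asymptotically minimal sequence, one has $\|g_{P_n}-g_E\|_{L^\infty(\C)}\to0$, and consequently $\omega_{P_n}\to\mu_E$. Write $d_n=\deg P_n$. The $\alpha_n$ are distinct, and only finitely many algebraic integers have bounded degree with all conjugates in a fixed bounded set (their minimal polynomials have bounded integer coefficients). Hence $d_n\to\infty$.

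The key estimate concerns sup norms. Since $P_n(z)=\prod_{j=1}^{d_n}(z-\alpha_n^{(j)})$, we have
\[
\tfrac1{d_n}\log|P_n(z)|=\int\log|z-w|\,d\nu_n(w),
\]
where $\nu_n$ is the normalized counting measure on $\operatorname{Gal}(\alpha_n)$.

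\emph{Lower bound.} Since $\mathrm{cap}(E)=1$ and $P_n$ is monic, the classical inequality $\|P_n\|_E\ge \mathrm{cap}(E)^{d_n}=1$ gives $\liminf\frac1{d_n}\log\|P_n\|_E\ge0$.

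\emph{Upper bound.} Take a weak limit $\nu$ of $\nu_n$. It is supported on $E$ because $\operatorname{dist}(\operatorname{Gal}(\alpha_n),E)\to0$. Now use arithmetic. The discriminant of $P_n$ is a nonzero integer, so
\[
\sum_{i\ne j}\log|\alpha_n^{(i)}-\alpha_n^{(j)}|\ge0 .
\]
By the standard Fekete argument (upper semicontinuity of the truncated kernel), this gives $I(\nu)=\iint\log|z-w|\,d\nu\,d\nu\ge0$. Since $\log\mathrm{cap}(E)=0$ is the maximal energy over probability measures on $E$, we get $\nu=\mu_E$. So $\nu_n\to\mu_E$ (this is essentially Rumely's equidistribution). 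Next fix $\delta>0$. For $z$ in the $\delta$-neighborhood $E_\delta$, upper semicontinuity of $w\mapsto\log|z-w|$, applied uniformly via the truncation $\max(\log|z-w|,-M)$, gives
\[
\limsup_n\sup_{z\in E_\delta}\int\log|z-w|\,d\nu_n\le\sup_{z\in E_\delta}\int\log|z-w|\,d\mu_E .
\]
By regularity, $U^{\mu_E}$ is continuous and equals $\log\mathrm{cap}(E)=0$ on $E$. Letting $\delta\to0$, this yields $\limsup\frac1{d_n}\log\|P_n\|_E\le0$. Combined with the lower bound, $\|P_n\|_E^{1/d_n}\to1=\mathrm{cap}(E)$, i.e.\ asymptotic minimality. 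I expect this uniformity of the upper bound near $E$ to be the main technical point. If the definition in \cite{BE} also requires zeros to stay in a fixed compact set or equidistribute, both already hold here.

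Finally, apply the \cite{BE} theorem to obtain the Klimek convergence \eqref{klimekconv}. The weak convergence $\omega_{P_n}=\frac1{2\pi}\Delta g_{P_n}\to\frac1{2\pi}\Delta g_E=\mu_E$ follows from uniform convergence of the Green functions, since the Laplacian is continuous in the distributional sense. If the \cite{BE} theorem instead needs a control of the form $\frac1{d_n}\log|P_n|\to g_E$ locally uniformly off $\mathrm{Pc}(E)$, this follows directly from $\nu_n\to\mu_E$, because $\log|z-w|$ is continuous there.
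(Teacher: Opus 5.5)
Your argument is correct and has the same skeleton as the paper. You show that $\{P_n\}$ is asymptotically minimal on $E$: the lower bound comes from $\|P_n\|_E\ge\mathrm{cap}(E)^{d_n}$, and the upper bound from a Hartogs-type estimate once $\nu_n\to\mu_E$. You then invoke \cite[Thm.~1.3]{BE} and take Laplacians.

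The difference is how you get the equidistribution $\nu_n\to\mu_E$. The paper first notes that $P_n$ is monic and $g_E$ is continuous and vanishes on $E$. So the hypothesis $\operatorname{dist}(\operatorname{Gal}(\alpha_n),E)\to0$ gives $h_E(\alpha_n)=\frac1{d_n}\sum_j g_E(\alpha_{n,j})\to0$. It then appeals to Theorem~\ref{thm klimek}, whose Lemma~\ref{lem:height-to-supnorm} gets $\nu_n\to\mu_E$ by citing Rumely's equidistribution theorem. You instead prove equidistribution directly by Fekete's classical argument: integrality of the discriminant, the truncated kernel, and uniqueness of the energy maximizer on $E$.

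Your route is self-contained and needs neither heights nor Rumely's adelic theorem. However, it uses in an essential way that the conjugates are localized near $E$, to place every weak limit on $E$. The paper's lemma gives asymptotic minimality under the weaker assumption $h_E(\alpha_n)\to0$ alone, where conjugates may be unbounded (as in the paper's example). It also fits the result into the height framework used for Corollary~\ref{cor:height-approx}.

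One correction: your paraphrase of \cite{BE} is too strong. Asymptotic minimality, even with bounded zeros, does not give Klimek convergence. For example, $P_n(z)=z^n(z-2)$ is monic with $\|P_n\|_{\partial\D}\le 3$, hence asymptotically minimal on the unit circle. Yet $P_n(2)=0=P_n(0)$, so $2\in K_{P_n}$ and $\|g_{P_n}-g_{\overline{\D}}\|_{L^\infty(\C)}\ge\log 2$. \cite[Thm.~1.3]{BE} additionally requires that, for every $\varepsilon>0$, all zeros of $P_n$ eventually lie in the $\varepsilon$-neighborhood of $\mathrm{Pc}(E)$. In your setting this is exactly the hypothesis $\operatorname{dist}(\operatorname{Gal}(\alpha_n),E)\to0$, so the proof goes through, but you should check this hypothesis explicitly rather than rely on asymptotic minimality alone.

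A minor point: the detour through $E_\delta$ is unnecessary. The truncation plus a Dini argument applied directly on $E$ already gives $\limsup_n\sup_E\frac1{d_n}\log|P_n|\le\sup_E g_E=0$.
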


Theorem~\ref{Arithmetic_BE} implies that filled Julia sets of $P_n$ converge to $\mathrm{Pc}(E)$ 
in the Klimek topology (see \S (\ref{Klimektop})).
In the special case when $E\subset\R$, the set $E$ is already
polynomially convex and hence Theorem~\ref{Arithmetic_BE}
provides a genuine approximation of $E$ itself by algebraic filled Julia
sets.

For each compact set $E\subset \C$ with positive $\mathrm{cap}(E)$, Rumely \cite{RumelyBilu} defines a canonical
height function $h_E$ given adelically by Green functions and extending
the classical Weil height associated with the unit disk. 
Our approximation result also admits a natural interpretation in arithmetic
dynamics. The constructions
above provide a dynamical realization of this height for symmetric regular
compact sets of capacity one. Namely, Rumely height of such sets arises as a limit of
canonical dynamical heights (in the sense of Call and Silverman \cite{CS} see \big(\ref{dheight})\big) attached to 
integral polynomial dynamics:

\begin{corollary}
\label{cor:height-approx}
Let $E\subset \C, \alpha_n\in \overline{Q}$ and $P_n\in\Z[z]$ be as in Theorem \ref{Arithmetic_BE}. 
Then for every algebraic number
 $\alpha\in\overline{\Q},$
\[
\lim_{n\to\infty} \hat h_{P_n}(\alpha)= h_E(\alpha)
\]
where $\hat h_{P_n}$ denotes the canonical (dynamical) height of $P_n$.
\end{corollary}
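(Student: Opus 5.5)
The plan is to compare both heights place by place, using the adelic decompositions. Fix $\alpha\in\overline{\Q}$ with conjugates $\alpha^{(1)},\dots,\alpha^{(d)}$. The Call--Silverman height of a monic $P_n\in\Z[z]$ decomposes as a sum of local canonical heights,
\[
\hat h_{P_n}(\alpha)=\frac1{[K:\Q]}\sum_{v\in M_K}n_v\,\hat\lambda_{P_n,v}(\alpha),
\qquad K=\Q(\alpha).
\]
The Rumely height has the analogous decomposition $h_E(\alpha)=\frac1d\sum_{\sigma}g_E(\sigma(\alpha))+\sum_{v\nmid\infty}(\cdots)$, where at the finite places the local contribution is the Green function of the closed unit polydisc, namely $\log^+|\alpha|_v$ (suitably normalized).

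First I treat the non-archimedean places. Since $P_n$ is monic with integer coefficients, it has good reduction at every finite place $v$. Hence the $v$-adic dynamical Green function is $\lambda_{P_n,v}(z)=\lim_k \deg(P_n)^{-k}\log^+|P_n^{k}(z)|_v=\log^+|z|_v$. This is the standard computation: if $|z|_v\le1$ the orbit stays in the unit disc, and if $|z|_v>1$ the leading term dominates, so $|P_n(z)|_v=|z|_v^{\deg P_n}$. The finite-place contributions of $\hat h_{P_n}(\alpha)$ therefore coincide exactly with those of $h_E(\alpha)$, for every $n$.

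Next I treat the archimedean places. These contribute $\frac1d\sum_{j} g_{P_n}(\alpha^{(j)})$, where the $g_{P_n}$ are the dynamical Green functions. The archimedean part of $h_E$ is $\frac1d\sum_j g_E(\alpha^{(j)})$. Here I need to check that the Green function of $\widehat\C\setminus E$ agrees with that of $\widehat\C\setminus\mathrm{Pc}(E)$, which holds because the unbounded component is the same. By Theorem~\ref{Arithmetic_BE}, $\|g_{P_n}-g_E\|_{L^\infty(\C)}\to0$. The archimedean difference is thus bounded by $\|g_{P_n}-g_E\|_\infty\to 0$, uniformly in $\alpha$. Adding up the two kinds of places gives $|\hat h_{P_n}(\alpha)-h_E(\alpha)|\le\|g_{P_n}-g_E\|_\infty\to0$.

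The main obstacle is bookkeeping of normalizations. I must make sure that Rumely's $h_E$ is defined with exactly the local factors above: the unit disc at finite places and $g_E$ with $\mathrm{cap}(E)=1$, so that there is no $\log\mathrm{cap}$ constant. I must also make sure that the Call--Silverman local heights for monic integral $P_n$ carry no additional constants, using the normalization of the local heights as limits of $\deg(P_n)^{-k}\log^+|P_n^k|_v$. Once the two definitions are aligned, the argument reduces to the uniform estimate of Theorem~\ref{Arithmetic_BE}. As a byproduct, the convergence is in fact uniform in $\alpha\in\overline{\Q}$.
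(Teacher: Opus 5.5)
Your proposal is correct and follows essentially the same route as the paper. Both arguments decompose $\hat h_{P_n}$ and $h_E$ place by place, use good reduction of the monic integral $P_n$ to identify the non-archimedean local canonical heights exactly with $\log^+|\cdot|_v$, and bound the archimedean discrepancy by $\|g_{P_n}-g_E\|_{L^\infty(\C)}\to 0$ via Theorem~\ref{Arithmetic_BE}; your remark that the convergence is uniform in $\alpha\in\overline{\Q}$ is implicit in the paper's estimate as well.
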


Since preperiodic points $\beta_n\in \overline{\Q}$ of $P_n$ satisfy
$\hat h_{P_n}(\beta_n)=0$, the corollary together with (\ref{klimekconv}) imply that such points
produce sequences of algebraic numbers with $h_E(\beta_n)\to0$.
Thus the dynamical approximation furnishes a natural mechanism for
constructing algebraic points of small height associated to $E$,
providing a dynamical counterpart to the equidistribution results of
Bilu and Rumely. 



\section{Preliminaries}
\label{prelim}

In this section we collect the basic notions from logarithmic potential theory,
polynomial dynamics, and arithmetic capacity theory that will be used in the 
sequel.
\subsection{Logarithmic potential theory}

We recall some basic notions from complex potential theory \cite{Ransford, Landkof}. 
Let $E\subset\C$ be a non-polar compact set.
We denote by $\mathrm{cap}(E)$ its logarithmic capacity and by $\mu_E$ its
equilibrium measure. Recall that $\mu_E$ is characterized as the unique
probability measure supported on $E$ minimizing the logarithmic energy
\[
I(\nu):=\iint_{\C\times\C}\log\frac{1}{|z-w|}\,d\nu(z)\,d\nu(w),
\qquad \nu\in\mathcal{P}(E),
\]
and
\[
V_E:=\inf_{\nu\in\mathcal{P}(E)} I(\nu)
=-\log\mathrm{cap}(E).
\]

Let $\Omega_E$ be the unbounded component of $\widehat{\C}\setminus E$.
The Green function of $\Omega_E$ with pole at infinity is denoted by $g_E$ which 
is characterized by $g\in SH(\C)$ satisfying
\[
g_E(z)=0\quad\text{q.e. on }E,
\qquad
g_E(z)=\log|z|-\log\mathrm{cap}(E)+o(1)\quad (z\to\infty).
\]

We also denote by $\mathrm{Pc}(E)$ the polynomially convex hull of $E$, i.e.\
the complement of the unbounded component of $\C\setminus E$.
It is well known that $g_E=g_{\mathrm{Pc}(E)}$ on $\C$. In particular, $\operatorname{cap}(E)=\operatorname{cap}(\mathrm{Pc}(E))$.

\subsection{Polynomial dynamics and Brolin measures}
Next, we review some basic results from polynomial dynamics \cite{CG, Ransford}.
Let $P\in\C[z]$ be a polynomial of degree $d\ge2$.
Its filled Julia set is defined by
\[
K_P:=\{z\in\C:\ \{P^{\circ n}(z)\}_{n\ge0}\ \text{is bounded}\},
\]
and its Julia set is $J_P:=\partial K_P$ where $P^{\circ n}$
denotes the $n$--th iterate of $P$. It turns out that $K_P$ is a
polynomially convex compact set.

The dynamical Green function of $P$ is defined by
\[
g_P(z):=\lim_{n\to\infty}\frac1{d^n}\log^+\!\bigl|P^{\circ n}(z)\bigr|.
\]
By a theorem of Brolin (see e.g.\ \cite[\S 6.5]{Ransford}), the limit exists and defines a non-negative
subharmonic function on $\C$ which is harmonic on $\C\setminus K_P$.
Moreover, $g_P$ is H\"older continuous (see e.g.
\cite{CG}). It satisfies the functional equation
\[
g_P\!\bigl(P(z)\bigr)=d\,g_P(z),
\]
as well as the normalization
\[
g_P(z)=0\ \text{on }K_P,
\qquad
g_P(z)=\log|z|-\frac1{d-1}\log|a_d|+o(1)
\quad (z\to\infty),
\]
where $a_d$ denotes the leading coefficient of $P$. In particular, $g_P$ coincides with the Green function of $\Omega_P:=\C\setminus K_P$ with the pole at infinity. Moreover,
\begin{equation}
\mathrm{cap}(K_P)=|a_d|^{-1/(d-1)}
\end{equation}

The associated Brolin measure (or measure of maximal entropy) is 
defined by
\[
\omega_P:=\Delta g_P,
\]
where $\Delta$ is the normalized Laplacian so that $\omega_P$ is a probability measure supported on $J_P$. The measure $\omega_P$
is invariant under $P$ in the sense that
\begin{equation}
P_*\omega_P=\omega_P,
\qquad
\frac{1}{d}\,P^*\omega_P=\omega_P,
\end{equation}
and it coincides with the equilibrium measure of the filled
Julia set $K_P$. We refer to \cite{CG,Ransford} for details.

\subsection{Klimek metric}\label{Klimektop}

Following Klimek \cite{Klimek}, there is a natural metric on the class
of regular polynomially convex compact sets defined via Green functions.

Let $\mathcal{R}$ denote the family of all regular polynomially convex
compact subsets of $\C$. For $E,F\in\mathcal{R}$ define
\[
\Gamma(E,F)
:=
\|g_E-g_F\|_{L^\infty(\C)},
\]
where $g_E$ denotes the Green function of $\widehat{\C}\setminus E$
with pole at infinity.

The function $\Gamma$ defines a metric on $\mathcal{R}$, we refer to
convergence with respect this metric as convergence in the \emph{Klimek topology}.
Moreover, $(\mathcal{R},\Gamma)$ is a complete metric space
\cite[Thm.~1]{Klimek}.

A key feature of this metric is its compatibility with polynomial dynamics.
Namely, if $P:\C\to\C$ is a polynomial of degree $d\ge2$, then the pullback
operator
\[
E \longmapsto P^{-1}(E)
\]
acts as a contraction on $(\mathcal{R},\Gamma)$
\cite[Thm.~2]{Klimek}.
Consequently, the filled Julia set $K_P$ is the unique fixed point of
this contraction. In particular, for every $E\in\mathcal{R}$ one has
\[
\Gamma\bigl((P^n)^{-1}(E),\,K_P\bigr)\longrightarrow 0,
\]
that is, iterated preimages converge to the filled Julia set in the
Klimek metric \cite[Cor.~6]{Klimek}.

\subsection{Asymptotically minimal polynomials}\label{amp}

A central notion introduced in \cite{BE} is
\emph{asymptotically minimal sequence} of polynomials.
This concept provides a framework for relating the potential
theory of a compact set $E$ with the dynamical properties
of associated extremal polynomials.

Let $E\subset\C$ be compact with $\mathrm{cap}(E)>0$.
A sequence of polynomials $P_n(z)=a_{n}z^{d_n}+\cdots$
is called \emph{asymptotically minimal on $E$} if
\[
\lim_{n\to\infty}\frac1{d_n}\log|a_n|
=-\log\mathrm{cap}(E)\
\text{and}\
\lim_{n\to\infty}\frac1{d_n}\log\|P_n\|_{E}=0.
\]
Informally, this means that $P_n$ achieves the asymptotically
minimal growth allowed by potential theory on $E$.

One of the main insights of \cite{BE} is that
asymptotic extremality forces convergence of dynamical invariants.
More precisely:

\begin{enumerate}
\item If $\{P_n\}$ is asymptotically minimal on $E$ and the zeros
are uniformly bounded, then the associated Brolin measures
$\omega_{P_n}$ converge weakly to the equilibrium measure
$\mu_E$ of $E$ \cite[Thm.~1.2]{BE}.

\item If, in addition, $E$ is regular for the Dirichlet problem and
the zeros of $P_n$ concentrate sufficiently close to $E$, then
the filled Julia sets converge to the polynomially convex hull:
\[
K_{P_n}\to \mathrm{Pc}(E)
\quad \text{in the Klimek topology}
\]
\cite[Thm.~1.3]{BE}.
\end{enumerate}

Thus, asymptotically minimal sequences provide a mechanism
linking classical logarithmic potential theory of $E$ with
complex dynamical objects such as Julia sets and Brolin measures.

In the present work, we obtain arithmetic counterparts of these
equidistribution phenomena. Using the arithmetic existence
theorem of Fekete--Szeg\H{o} together with Rumely's theory of
adelic heights, we show that algebraic polynomials whose
Galois conjugates approximate $E$ automatically exhibit the
same dynamical convergence properties, thereby extending the
equidistribution results of \cite{BE} to an
arithmetic setting in the spirit of Bilu \cite{Bilu} and Rumely \cite{RumelyBilu}.

\subsection{Heights attached to compact sets}

We first recall the classical logarithmic Weil height in its adelic decomposition (see eg \cite{BG,RumelyBilu}).
Let $\alpha\in\overline{\Q}$ and let $K/\Q$ be a number field containing $\alpha$.
Write $M_K$ for the set of places of $K$. For each $v\in M_K$, let $K_v$ be the
completion of $K$ at $v$, and fix an algebraic closure $\overline{K}_v$ with
completion $\C_v$. We normalize the absolute values $|\cdot|_v$ so that the
product formula holds (see  \cite[\S 1.4]{BG})

\begin{equation}\label{pf}
\prod_{v\in M_K} |\alpha|_v^{N_v} \;=\; 1
\ \text{for all }\alpha\in K^\times,
\end{equation}
where  $N_v := [K_v:\Q_v]$.

The (absolute logarithmic) Weil height admits a decomposition as a sum of local
heights
\begin{equation}\label{eq:weil-height} 
h(\alpha)=\sum_{v\in M_K} h_v(\alpha),
\end{equation}
where each local contribution is given by
\[
h_v(\alpha)
=
\frac{N_v}{[K:\Q]}
\log^+|\alpha|_v
\]
here $\log^+ t:=\max\{\log t,0\}$. This definition is independent of the choice of $K$ by the product formula (see eg.\ \cite{BG}).

Let now $E\subset\C$ be a compact set symmetric with respect to the real axis and
satisfying $\mathrm{cap}(E)=1$. Following Rumely \cite{RumelyBilu}, we modify the
Archimedean local height by replacing $\log^+|z|$ with the Green function $g_E(z)$
of $\C\setminus E$ with pole at infinity. More precisely, define local functions
\[
h_{E,v}(z)=
\begin{cases}
g_E(z), & v=\infty,\\[2mm]
\log^+|z|_v, & v\ \text{non-Archimedean}.
\end{cases}
\]
For $\alpha\in\overline{\Q}$, the Rumely height is then defined by
\begin{equation}\label{Rdefn}
h_E(\alpha)
=
\frac{1}{[K:\Q]}\sum_{v\in M_K}
N_v
h_{E,v}\bigl(\alpha\bigr).
\end{equation}
Thus $h_E$ is obtained from the classical Weil height by replacing the
Archimedean escape function $\log^+|z|$ with the Green function $g_E$ associated
to the compact set $E$.

Equivalently, if $P_\alpha(x)\in\Z[x]$ denotes the minimal polynomial
of $\alpha\in\overline{\Q}$ with degree $d:=\deg(\alpha)$, leading coefficient $a_d$ the roots of $P_n$
are (complete set) of Galois conjugates $Gal(\alpha):=\{\alpha_1,\dots,\alpha_d\}$ of $\alpha$.
Moreover, a standard argument using the product formula and Gauss Lemma yields 
\begin{equation}
\label{Rheight}
h_E(\alpha)
=
\frac{1}{d}
\left(
\log|a_d|
+
\sum_{j=1}^{d} g_E(\alpha_j)
\right).
\end{equation}

A central result of Rumely \cite[Theorem~1]{RumelyBilu} asserts that
if $\{\alpha_n\}\subset\overline{\Q}$ satisfies
\begin{equation}
\deg(\alpha_n)\to\infty,
\qquad
h_E(\alpha_n)\to0,
\end{equation}
then the discrete probability measures $\frac{1}{\deg(\alpha_n)}\sum_{\zeta\in Gal(\alpha_n)}\delta_\zeta$ supported equally on the Galois
conjugates of $\alpha_n$ converge weakly to the equilibrium measure
$\mu_E$. 
When $E$ is the unit disk, this reduces to Bilu's theorem, which states
that algebraic numbers of small Weil height become equidistributed on
the unit circle \cite{Bilu}.


\section{Results and Proofs}

First, we prove the following lemma:
\begin{lemma}
\label{lem:height-to-supnorm}
Let $E\subset\C$ be a compact set that is symmetric with respect to the real axis. Assume that $E$ is
regular for the Dirichlet problem and $\mathrm{cap}(E)=1$. Let $\{\alpha_n\}\subset\overline{\Q}$ be a sequence
of distinct algebraic numbers such that
\[
h_E(\alpha_n)\to 0
\]
Let $P_n\in\Z[z]$ be the minimal polynomial of $\alpha_n$. Then $P_n$ is asymptotically minimal on $E$. 
\end{lemma}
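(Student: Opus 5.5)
We verify the two conditions in the definition of asymptotic minimality (\S\ref{amp}) directly from the closed formula \eqref{Rheight}. Write $d_n:=\deg(\alpha_n)$ and let $a_n\in\Z\setminus\{0\}$ be the leading coefficient of $P_n$. Let $\alpha_{n,1},\dots,\alpha_{n,d_n}$ be the conjugates of $\alpha_n$, and set $\nu_n:=\frac1{d_n}\sum_j\delta_{\alpha_{n,j}}$. By \eqref{Rheight},
\[
h_E(\alpha_n)=\frac1{d_n}\log|a_n|+\int g_E\,d\nu_n .
\]
Both terms are non-negative, since $|a_n|\ge1$ and $g_E\ge0$. Hence $h_E(\alpha_n)\to0$ forces
\[
\frac1{d_n}\log|a_n|\to0=-\log\mathrm{cap}(E)
\qquad\text{and}\qquad
\int g_E\,d\nu_n\to0 .
\]
The first limit is the leading-coefficient condition. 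We also need $d_n\to\infty$. Since $g_E(z)\ge\log^+|z|-C$, the Weil heights of the $\alpha_n$ are bounded. If $d_n$ did not tend to infinity, Northcott's theorem would give only finitely many such $\alpha_n$ along a subsequence of bounded degree, contradicting distinctness. So $d_n\to\infty$, and Rumely's theorem \cite[Theorem~1]{RumelyBilu} applies: $\nu_n\to\mu_E$ weakly.

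For the sup-norm condition, the lower bound is classical. For any monic-normalized polynomial, $\|P_n\|_E\ge|a_n|\,\mathrm{cap}(E)^{d_n}=|a_n|\ge1$, so $\liminf\frac1{d_n}\log\|P_n\|_E\ge0$. For the upper bound we write
\[
\frac1{d_n}\log|P_n(z)|=\frac1{d_n}\log|a_n|+U_n(z),
\qquad
U_n(z):=\int\log|z-w|\,d\nu_n(w),
\]
so it suffices to show $\limsup_n\sup_{z\in E}U_n(z)\le0$.

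Fix a large $R$ with $E\subset\{|z|<R/2\}$ and split $\nu_n=\nu_n'+\nu_n''$ according to $|w|\le R$ and $|w|>R$. For $|w|>R$ and $z\in E$ we have $\log|z-w|\le\log(2|w|)\le g_E(w)+C$, and $g_E\ge c_R>0$ there. Therefore
\[
\nu_n''(\C)\le\frac1{c_R}\int g_E\,d\nu_n\to0,
\qquad
\sup_{z\in E}\int\log|z-w|\,d\nu_n''(w)\le C\,\nu_n''(\C)+\int g_E\,d\nu_n\to0 .
\]
So the far part contributes $o(1)$ uniformly on $E$. Moreover $\nu_n'$ has support in the disk $\{|w|\le R\}$ and still converges weakly to $\mu_E$.

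The main step is the uniform upper bound for the near potentials $U_n'(z):=\int\log|z-w|\,d\nu_n'(w)$. Choose $z_n\in E$ with $U_n'(z_n)=\max_E U_n'$ (the maximum is attained because $U_n'$ is upper semicontinuous). Pass to a subsequence realizing the $\limsup$ with $z_n\to z_0\in E$. The principle of descent \cite{Ransford} applies since the supports are uniformly bounded, and gives
\[
\limsup_n U_n'(z_n)\le U^{\mu_E}(z_0):=\int\log|z_0-w|\,d\mu_E(w).
\]
By Frostman's theorem, $U^{\mu_E}=g_E+\log\mathrm{cap}(E)=g_E$. This identity holds everywhere because $E$ is regular, so $g_E$ is continuous and vanishes on all of $E$. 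Thus $U^{\mu_E}(z_0)=0$, which yields $\limsup\frac1{d_n}\log\|P_n\|_E\le0$ and completes the argument.

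I expect this uniformity to be the delicate point. It is exactly where Dirichlet regularity is used, and it is why the far zeros must be controlled separately through $\int g_E\,d\nu_n\to0$, so that the principle of descent is applied only to measures with uniformly bounded support.
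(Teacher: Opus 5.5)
Your proof is correct and has the same skeleton as the paper's:
\begin{itemize}
\item the identity $\frac1{d_n}\log\|P_n\|_E=\frac1{d_n}\log|a_n|+\sup_E U_n$;
\item the split of \eqref{Rheight} into two non-negative terms;
\item Northcott to get $d_n\to\infty$, followed by Rumely's equidistribution;
\item an upper-semicontinuity principle for potentials, combined with $g_E\equiv0$ on $E$, for the upper bound;
\item a Chebyshev-type inequality for the lower bound.
\end{itemize}

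The real difference is in the upper bound. The paper goes from $\nu_n\to\mu_E$ to $u_n\to g_E$ in $L^1_{\mathrm{loc}}$, and then applies Hartogs' lemma to the full potentials. The standard implication from weak convergence of measures to $L^1_{\mathrm{loc}}$ convergence of their potentials assumes uniformly bounded supports. The paper's own Example shows the conjugates here can escape to infinity. In general, a mass $1/n$ placed at a point of modulus $e^{n^2}$ leaves the weak limit unchanged but sends the potentials to $+\infty$.

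Your near/far splitting supplies exactly the missing control. You use $\log(2|w|)\le g_E(w)+C$ and $g_E\ge c_R>0$ on $\{|w|>R\}$, together with $\int g_E\,d\nu_n\to0$. Your descent argument along $z_n\to z_0\in E$ is then the mechanism behind Hartogs' lemma once supports are bounded.

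You also improve on two smaller points. You justify $d_n\to\infty$ through a bounded Weil height, via $g_E\ge\log^+|z|-C$, where the paper simply cites Northcott. Your lower bound $\|P_n\|_E\ge|a_n|\,\mathrm{cap}(E)^{d_n}$ is exact and handles non-monic $P_n$ explicitly. The paper instead applies the monic Chebyshev-constant limit to $P_n$ directly, which is harmless since $|a_n|\ge1$ but left implicit.

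Your version costs a few extra lines, but it makes rigorous the one step the paper leaves unjustified.
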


\begin{proof}
We remark that by Northcott finiteness property we have $d_n=\deg(\alpha_n)\to\infty.$ Next, we write
\[
P_n(z)=a_n\prod_{j=1}^{d_n}(z-\alpha_{n,j}),
\]
where $\alpha_{n,1},\dots,\alpha_{n,d_n}$ are the Galois conjugates of $\alpha_n$
in $\C$ and $a_n\in\Z\setminus\{0\}$ is the leading coefficient. 

Next, we denote
\[
\mu_n:=\frac1{d_n}\sum_{j=1}^{d_n}\delta_{\alpha_{n,j}}.
\]
Then for every $z\in\C$,
\[
\frac1{d_n}\log|P_n(z)|
=
\frac1{d_n}\log|a_n|+\int \log|z-\zeta|\,d\mu_n(\zeta).
\]
Taking the supremum over $z\in E$ gives
\begin{equation}\label{eq:supnorm-potential}
\frac1{d_n}\log\|P_n\|_E
=
\frac1{d_n}\log|a_n|
+
\sup_{z\in E}\int \log|z-\zeta|\,d\mu_n(\zeta).
\end{equation}

Let $g_E$ be the Green function of $\C\setminus E$ with pole at $\infty$. By (\ref{Rheight}) Rumely's height $h_E$ satisfies \begin{equation}\label{eq:height-lowerbound}
0\le
\frac{1}{d_n}\sum_{j=1}^{d_n} g_E(\alpha_{n,j})
\le h_E(\alpha_n).
\end{equation}
Hence $h_E(\alpha_n)\to0$ implies
\begin{equation}\label{eq:GE-average}
\frac{1}{d_n}\sum_{j=1}^{d_n} g_E(\alpha_{n,j})\longrightarrow 0.
\end{equation}

Since $\mathrm{cap}(E)=1$ we have
\[
g_E(z)=\int \log|z-\zeta|\,d\mu_E(\zeta),
\qquad z\in\C,
\]
where $\mu_E$ is the equilibrium measure of $E$, and $g_E\equiv 0$ on $E$ since $E$ is
regular. Now, consider the potential
\[
u_n(z):=\int \log|z-\zeta|\,d\mu_n(\zeta).
\]
By Rumely's equidistribution theorem \cite{RumelyBilu}, the assumptions
$h_E(\alpha_n)\to0$ and $d_n\to\infty$ imply that $\mu_n\xrightarrow[n\to\infty]{w^*}\mu_E$. Consequently,
$u_n\to g_E$ in $L^1_{\mathrm{loc}}(\C)$ and quasi-everywhere (see, e.g.,
\cite[Chapter~I, \S2]{Landkof}). In particular, by Hartogs' lemma for subharmonic functions,
\[
\limsup_{n\to\infty}\sup_{z\in E} \bigl(u_n(z)-g_E(z)\bigr)\le 0.
\]
Since $g_E=0$ on $E$, this yields
\begin{equation}\label{eq:limsup-sup-u}
\limsup_{n\to\infty}\ \sup_{z\in E} u_n(z)\le 0.
\end{equation}
Note that since $a_n\in\Z\setminus\{0\}$ by (\ref{Rheight}) we have
\begin{equation}\label{eq:leadcoeff}
0\le \frac1{d_n}\log|a_n|\le h(\alpha_n)
\end{equation}
which yields
\begin{equation}\label{eq:leadcoeff-to-0}
\lim_{n\to\infty}\frac1{d_n}\log|a_n|=0.
\end{equation}

Moreover, combining \eqref{eq:supnorm-potential}, \eqref{eq:limsup-sup-u}, and
\eqref{eq:leadcoeff-to-0} gives
\[
\limsup_{n\to\infty}\frac1{d_n}\log\|P_n\|_E\le 0.
\]
Recall that since $\mathrm{cap}(E)=1$, the Chebyshev constants satisfy
\begin{equation}
\lim_{d\to\infty}
\left(\inf\{\|Q\|_E:\ Q \text{ monic},\ \deg Q=d\}\right)^{1/d}
=1.
\end{equation}
Hence, for any sequence of monic polynomials $Q$ of degree $d$ we have
\[
\liminf_{d\to\infty}\frac{1}{d}\log\|Q\|_E \ge 0
\]
and in particular
\[
\liminf_{n\to\infty}\frac{1}{d_n}\log\|P_n\|_E \ge 0.
\]
Together with the previously obtained limsup inequality this yields
\[
\lim_{n\to\infty}\frac{1}{d_n}\log\|P_n\|_E =0.
\]

\end{proof}


The following result is a dynamical analogue of the equidistribution
phenomena of Bilu \cite{Bilu} and Rumely \cite{RumelyBilu}.

\begin{theorem}
\label{main thm}
Let $E\subset\C$ be a compact set symmetric with respect to the real axis,
regular for the Dirichlet problem, and assume $\mathrm{cap}(E)=1$.
Let $\{\alpha_n\}\subset\overline{\Q}$ be a sequence of distinct algebraic numbers such that
\[
h_E(\alpha_n)\to 0.
\]
Let $P_n\in\Z[z]$ be the minimal polynomial of $\alpha_n$, and assume that the
roots of $P_n$ are uniformly bounded in $\C$ (equivalently, there exists $R>0$
such that all zeros of all $P_n$ lie in $\overline{D(0,R)}$).
Then the associated Brolin measures satisfy
\[
\omega_{P_n}\xrightarrow[n\to\infty]{w^*}\mu_E.
\]
\end{theorem}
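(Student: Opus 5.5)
The plan is to combine Lemma~\ref{lem:height-to-supnorm} with the first result of \cite{BE} quoted in \S\ref{amp} (\cite[Thm.~1.2]{BE}). That theorem says that an asymptotically minimal sequence on $E$ whose zeros are uniformly bounded has Brolin measures converging weak-$*$ to $\mu_E$. So the argument reduces to checking the hypotheses of \cite[Thm.~1.2]{BE} for the minimal polynomials $P_n$.

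\emph{Step 1: asymptotic minimality.} The hypotheses of Theorem~\ref{main thm} contain those of Lemma~\ref{lem:height-to-supnorm}: $E$ is symmetric, regular, $\mathrm{cap}(E)=1$, and the $\alpha_n$ are distinct with $h_E(\alpha_n)\to0$. The lemma then gives that $\{P_n\}$ is asymptotically minimal on $E$, namely
\[
\lim_{n\to\infty}\frac{1}{d_n}\log|a_n|=0=-\log\mathrm{cap}(E),
\qquad
\lim_{n\to\infty}\frac1{d_n}\log\|P_n\|_E=0 .
\]
As recorded in its proof, Northcott's theorem gives $d_n\to\infty$. Hence $\deg P_n\ge2$ for large $n$, and the Brolin measures $\omega_{P_n}$ are defined. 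We discard the finitely many indices with $d_n\le1$.

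\emph{Step 2: bounded zeros and conclusion.} By assumption all zeros of all $P_n$ lie in $\overline{D(0,R)}$. This is exactly the remaining hypothesis of \cite[Thm.~1.2]{BE}, which then yields $\omega_{P_n}\to\mu_E$ weak-$*$.

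\emph{Expected obstacle.} The only delicate point is whether \cite[Thm.~1.2]{BE} applies to non-monic polynomials; the leading coefficient $a_n\in\Z$ need not be $\pm1$. The definition of asymptotic minimality in \S\ref{amp} allows general leading coefficients $a_n$, so this should cause no trouble. If a self-contained argument were preferred, I would proceed as follows.

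\begin{enumerate}
\item The Brolin measure is the equilibrium measure of $K_{P_n}$, and $\mathrm{cap}(K_{P_n})=|a_n|^{-1/(d_n-1)}\to1$.
\item The bound $\|P_n\|_E\le e^{o(d_n)}$ and the Bernstein--Walsh inequality give $g_{P_n}\le g_E+o(1)$ uniformly on compact sets. This uses uniform boundedness of the zeros to control the iterates.
\item Compactness of the probability measures on a fixed large disk, together with uniqueness of the energy minimizer, identifies every weak limit of $\omega_{P_n}$ as $\mu_E$.
\end{enumerate}

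Uniform boundedness of the zeros matters here: it keeps the $K_{P_n}$ inside a fixed disk, which provides tightness of $\{\omega_{P_n}\}$ and rules out mass escaping to infinity.
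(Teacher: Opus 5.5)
Your proposal is correct and follows the paper's proof. Lemma~\ref{lem:height-to-supnorm} gives asymptotic minimality of $\{P_n\}$ on $E$, and together with the uniform bound on the zeros this is exactly the hypothesis of the \cite{BE} equidistribution theorem (cited as Theorem~1.1 in the paper's proof and Theorem~1.2 in \S\ref{amp}), which yields $\omega_{P_n}\to\mu_E$.
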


We remark that for a sequence of algebraic numbers with small height their Galois conjugates need not to be bounded:
\begin{example}
Take $E=\overline{\D}$ be the closed unit disc so that $h_E$ is the usual
absolute logarithmic Weil height. For each $d\ge2$, let $N_d\in\Z$ be large and consider
\[
f_d(x)=x^d-N_d x^{d-1}+1\in\Z[x],
\]
and let $\alpha_d$ be any root of $f_d$.

On $|z|=1$ we have
\[
|-N_d z^{d-1}|=N_d > |z^d+1| \ \text{as}\ |N_d|>2
\]
so by Rouch\'e's theorem the polynomial $f_d$ has exactly $d-1$ zeros in $|z|<1$.
Hence only one Galois conjugate of $\alpha_d$ lies outside the unit disk.

Moreover, the remaining root lies near $N_d$ (in particular it satisfies
$|\sigma(\alpha_d)|\asymp N_d$ for that conjugate). Therefore
\[
h(\alpha_d)
=\frac1d\sum_{\sigma}\log^+|\sigma(\alpha_d)|
\approx \frac{1}{d}\log N_d .
\]
Choosing e.g.\ $N_d=\lfloor e^{\sqrt d}\rfloor$, we obtain $h(\alpha_d)\to0$ as $d\to\infty$,
while
\[
\max_{\sigma}|\sigma(\alpha_d)|\longrightarrow\infty 
\]
i.e. the Galois conjugates are not uniformly bounded. 
\end{example}

\begin{proof}[Proof of Theorem \ref{main thm}]
Note that by Lemma \ref{lem:height-to-supnorm}
$P_n$ is asymptotically minimal on $E$ in the sense of \cite{BE}. Since the zeros of $P_n$ are  uniformly bounded; by \cite[Theorem~1.1]{BE}
the associated Brolin measures converge to the equilibrium measure:
\[
\omega_{P_n}\xrightarrow[n\to\infty]{w^*}\mu_E.
\]
\end{proof}

The following result an immediate consequence of \cite[Theorem 1.3]{BE} and Theorem \ref{main thm}

\begin{theorem}
\label{thm klimek}
Let $E\subset\C$ be a compact set symmetric with respect to the real axis,
regular for the Dirichlet problem, and assume $\mathrm{cap}(E)=1$.
Let $\{\alpha_n\}\subset\overline{\Q}$ be a sequence of distinct algebraic numbers such that
\[
h_E(\alpha_n)\to 0.
\]
Let $P_n\in\Z[z]$ be the minimal polynomial of $\alpha_n$.
Assume that for every $\varepsilon>0$ there exists $N\in\N$ such that for all $n\ge N$
all zeros of $P_n$ are contained in the $\varepsilon$--neighborhood of $\mathrm{Pc}(E)$.
Then
\[
\Gamma\!\bigl(K_{P_n},\,\mathrm{Pc}(E)\bigr)\longrightarrow 0.
\]
\end{theorem}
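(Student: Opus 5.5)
The plan is to reduce Theorem~\ref{thm klimek} to \cite[Theorem~1.3]{BE}, whose hypotheses are: $E$ regular, $\{P_n\}$ asymptotically minimal on $E$, and zeros of $P_n$ eventually contained in every $\varepsilon$--neighborhood of $\mathrm{Pc}(E)$. The first two are available from what we already have. Regularity is assumed. Asymptotic minimality follows from Lemma~\ref{lem:height-to-supnorm}, since $h_E(\alpha_n)\to0$ and the $\alpha_n$ are distinct. In particular $d_n\to\infty$ and
\[
\frac1{d_n}\log|a_n|\to 0=-\log\mathrm{cap}(E),
\qquad
\frac1{d_n}\log\|P_n\|_E\to0 .
\]
The third hypothesis is exactly the assumption on the zeros in the statement.

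Two further things need checking before invoking \cite[Theorem~1.3]{BE}. First, the zeros must be uniformly bounded, which is the standing hypothesis of \cite[Theorem~1.1]{BE} and of Theorem~\ref{main thm}. This follows from the $\varepsilon$--neighborhood hypothesis with $\varepsilon=1$: for $n\ge N$ all zeros lie in the bounded set $\{z:\operatorname{dist}(z,\mathrm{Pc}(E))\le1\}$, and the finitely many $P_n$ with $n<N$ have finitely many zeros. Hence Theorem~\ref{main thm} applies and gives $\omega_{P_n}\to\mu_E$ weakly.

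Second, the filled Julia sets $K_{P_n}$ must lie in the class $\mathcal R$, so that $\Gamma(K_{P_n},\mathrm{Pc}(E))$ is defined. $K_{P_n}$ is polynomially convex, and it is regular because $g_{P_n}$ is continuous (Brolin). Likewise $\mathrm{Pc}(E)\in\mathcal R$, since $g_{\mathrm{Pc}(E)}=g_E$ is continuous by regularity of $E$. Note also that $\deg P_n\ge2$ for large $n$, so the dynamics is nontrivial.

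The main obstacle is matching the exact hypotheses of \cite[Theorem~1.3]{BE}, which we quote only informally as the zeros ``concentrating sufficiently close to $E$''. If the published form asks for closeness to $E$ rather than to $\mathrm{Pc}(E)$, I would replace $E$ by $\mathrm{Pc}(E)$ throughout. This substitution is harmless: $g_E=g_{\mathrm{Pc}(E)}$, $\mathrm{cap}(E)=\mathrm{cap}(\mathrm{Pc}(E))$, and $\|P\|_E=\|P\|_{\mathrm{Pc}(E)}$ by the maximum principle, so asymptotic minimality transfers. Regularity also transfers, since the Green function is unchanged.

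If instead one wanted to see the mechanism directly, the core estimate in \cite{BE} presumably combines
\[
\frac1{d_n}\log|P_n|\to g_E
\]
locally uniformly off a neighborhood of $\mathrm{Pc}(E)$ with the standard bound $|g_P-\tfrac1d\log|P||\le C/d$ away from $K_P$, and then controls $g_{P_n}$ near $\mathrm{Pc}(E)$ using the zero-location hypothesis. I would rely on the cited theorem rather than redoing this.
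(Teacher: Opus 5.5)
Your proposal is correct and follows the paper's route. The paper simply declares Theorem~\ref{thm klimek} an immediate consequence of \cite[Theorem~1.3]{BE} together with Theorem~\ref{main thm}, with asymptotic minimality supplied by Lemma~\ref{lem:height-to-supnorm}; your extra checks (uniform boundedness of the zeros from the $\varepsilon=1$ case, membership of $K_{P_n}$ and $\mathrm{Pc}(E)$ in $\mathcal{R}$, and the harmless replacement of $E$ by $\mathrm{Pc}(E)$) only make explicit what the paper leaves implicit.
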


\begin{proof}[Proof of Theorem~\ref{Dynamical_FS}]
Let $U\supset \mathrm{Pc}(E)$ be an open neighborhood. Assume

\smallskip
\noindent\emph{Case 1: $\operatorname{cap}(E)=1$.}
Choose a decreasing neighborhood basis $U_n\searrow E$ by bounded open sets.
By the theorem of Fekete--Szeg\H{o}~\cite{FeketeSzego}, for each $n$ there exists
an algebraic integer $\alpha_n$ whose Galois conjugates all lie in $U_n$; let
$P_n\in\Z[z]$ be the (monic) minimal polynomial of $\alpha_n$.
By Theorem~\ref{thm klimek} we have
\begin{equation}\label{eq:klimek-conv-proof}
\Gamma\!\bigl(K_{P_n},\mathrm{Pc}(E)\bigr)
=\|g_{P_n}-g_E\|_{L^\infty(\C)}\longrightarrow 0 ,
\end{equation}
where $g_E$ is the Green function of $\widehat{\C}\setminus \mathrm{Pc}(E)$ with pole at $\infty$.

Set $F:=\C\setminus U$. Then $F$ is closed and disjoint from $\mathrm{Pc}(E)$.
Since $E$ is regular, $g_E$ is continuous, $g_E\equiv0$ on $\mathrm{Pc}(E)$, and
$g_E>0$ on $\C\setminus \mathrm{Pc}(E)$. Hence
\[
\delta:=\inf_{z\in F} g_E(z) >0.
\]
Choose $n$ large enough so that $\|g_{P_n}-g_E\|_{L^\infty(\C)}<\delta/2$.
Then for every $z\in F$,
\[
g_{P_n}(z)\ge g_E(z)-\|g_{P_n}-g_E\|_{L^\infty(\C)} \ge \delta/2>0.
\]
Since $K_{P_n}=\{z\in\C:\ g_{P_n}(z)=0\}$, we get $K_{P_n}\cap F=\varnothing$, i.e.
$K_{P_n}\subset U$. 

\smallskip
\noindent\emph{Case 2: $\operatorname{cap}(E)>1$.}
By monotonicity and continuity from below
of logarithmic capacity we can choose a compact regular symmetric subset $S\subset E$ such that
$\operatorname{cap}(S)=1$ Then $Pc(S)\subset \mathrm{Pc}(E)\subset U$.
Applying Step~1 to the compact set $S$ and the same neighborhood $U$, we obtain a
sequence of monic polynomials $P_n\in\Z[z]$ of degrees $\ge2$ such that
$K_{P_n}\subset U$ for all sufficiently large $n$.
This proves the first implication of the theorem.

Conversely, assume that (ii) holds. Since $P_n$ is monic we have $\operatorname{cap}(K_{P_n})=1$ which implies $\operatorname{cap}(U)\geq 1$. Now, choosing a decreasing neighborhood basis $U_k\searrow \mathrm{Pc}(E)$ by monotonicity of the capacity we get $\operatorname{cap}(E)=\operatorname{cap}(\mathrm{Pc}(E))\geq 1$. 
\end{proof}

\begin{proof}[Proof of Theorem~\ref{Arithmetic_BE} ]
Let $\epsilon_n:=2\operatorname{dist}(\operatorname{Gal}(\alpha_n),E)$ and $U_n:=\{z\in\C: \operatorname{dist}(z,E)<\epsilon_n\}$. Then $U_n$ is bounded decreasing neighborhood basis with $U_n\searrow E$.
Let
$P_n\in\Z[z]$ be the (monic) minimal polynomial of $\alpha_n$. Note that since $E$ is regular we have $h_E(\alpha_n)\to 0$.
Thus the first assertion follows from Theorem~\ref{thm klimek}. 
Then uniform convergence of the potentials $g_{P_n}\to g_E$ on $\C$ gives 
$$\omega_{P_n}=\Delta g_{P_n}\xrightarrow[n\to\infty]{w^*} \Delta g_E=\mu_E.$$
\end{proof}
\begin{remark}[Obstruction when $\mathrm{cap}(E)>1$]
\label{rem:cap>1-obstruction}
The restriction $\mathrm{cap}(E)=1$ in Theorem~\ref{Arithmetic_BE} (and consequently in
Corollary~1) is not merely a normalization, but is forced by an intrinsic
capacity constraint for filled Julia sets of integral polynomials. Indeed, if $P(z)=a_d z^d+\cdots\in\Z[z]$ has degree $d\ge2$, then its filled Julia
set $K_P$ satisfies the well-known identity
\begin{equation}\label{eq:cap-julia}
\mathrm{cap}(K_P)=|a_d|^{-1/(d-1)}.
\end{equation}
In particular, since $a_d\in\Z\setminus\{0\}$ we have $|a_d|\ge1$, and therefore
\[
\mathrm{cap}(K_P)\le 1,
\]
with equality if and only if $P$ is monic up to sign.

Consequently, no sequence of polynomials $P_n\in\Z[z]$ can produce filled Julia
sets $K_{P_n}$ converging (in Klimek topology, or even in the sense of Green
functions at infinity) to a compact set $E$ with $\mathrm{cap}(E)>1$, since the
capacity mismatch persists in the limit.
Thus Theorem~\ref{Arithmetic_BE} and Corollary~1 cannot extend to the case $\mathrm{cap}(E)>1$
within the class of integral polynomials.
\end{remark}

We briefly recall the notion of the dynamical (canonical) height attached to a
polynomial map (see \cite{CS} for details).
Let $K$ be a number field and let $P\in K[z]$ be a polynomial of degree $d\ge 2$.
The \emph{canonical height} (or \emph{dynamical height}) associated to $P$ is the function
\begin{equation}\label{dheight}
\hat h_P:\overline{K}\longrightarrow \R_{\ge 0}
\end{equation}
defined by the limit
\begin{equation}
\hat h_P(\alpha)
:=\lim_{n\to\infty}\frac{1}{d^n}\,h\!\bigl(P^{\circ n}(\alpha)\bigr),
\end{equation}
where $h(\cdot)$ denotes the absolute logarithmic Weil height and $P^{\circ m}$
is the $m$--th iterate of $P$. It follows from \cite{CS} that the limit exists, and in particular
that $\hat h_P$ is well-defined and it satisfies the functional equation
\[
\hat h_P(P(\alpha))=d\,\hat h_P(\alpha).
\]
Moreover, $\hat h_P$ vanishes precisely on the set of preperiodic points of $P$. Recall that a point $\alpha$ is called preperiodic for $P$ if the orbit set $\{P^{\circ n}(\alpha): n\geq 0\}$ is finite.


\begin{proof}[Proof of Corollary \ref{cor:height-approx}]
Fix $\alpha\in\overline{\Q}$ and let $K/\Q$ be a number field containing $\alpha$.
Let $M_K$ be the set of places of $K$, and write $n_v=[K_v:\Q_v]/[K:\Q]$ for the
standard weights. 

Since $P_n\in\Z[z]\subset K[z]$, we may view each $P_n$ as a polynomial over $K$.
On the other hand, by Call--Silverman \cite{CS} the canonical dynamical height $\hat h_{P_n}$ admits the 
decomposition
\begin{equation}\label{ddecom}
\hat h_{P_n}(\alpha)
=
\sum_{v\in M_K} n_v\,\hat h_{P_n,v}(\alpha),
\end{equation}
where the local canonical heights are defined by
\begin{equation}
\hat h_{P_n,v}(z):=\lim_{k\to\infty}\frac{1}{d_n^k}\log^+\!\bigl|P_n^{\circ k}(z)\bigr|_v
\qquad (z\in \overline{K}_v).
\end{equation}

Note that by (\ref{Rdefn}) the height $h_E$  also admits the decomposition 
\[
h_E(\alpha)=\sum_{v\in M_K} n_v\,h_{E,v}(\alpha)
\]
where $h_{E,v}(z)=\log^+|z|_v$ for non-Archimedean $v$.

Since $P_n$ is monic with integer coefficients, it has good reduction at every
non-Archimedean place $v$. In this case the $v$-adic filled Julia set
coincides with the closed unit disk $E_v=\{z\in \C_v:\ |z|_v\le 1\}$ and the local canonical height
reduces to the standard escape function,
hence the canonical local dynamical height equals $\log^+|\cdot|_v$
(see e.g.\ \cite[Example~5.4]{BH}) that is
\begin{equation}
\hat h_{P_n,v}(z)=\log^+|z|_v\qquad (\text{for}\ v\neq\infty).
\end{equation}
Hence the non-Archimedean contributions to
$\hat h_{P_n}(\alpha)$ are independent of $n$ and match those of $h_E(\alpha)$.

On the other hand, at $v=\infty$ we have
\[
\hat h_{P_n,\infty}(\alpha)=\frac{1}{[K:\Q]}\sum_{\sigma:K\hookrightarrow\C} g_{P_n}(\sigma(\alpha)),
\]
where $g_{P_n}$ is the complex dynamical Green function of $P_n$.
Thus by Theorem~\ref{Arithmetic_BE} we have 
\[
\bigl|\hat h_{P_n,\infty}(\alpha)-h_{E,\infty}(\alpha)\bigr| \le \|g_{P_n}-g_E\|_{L^\infty(\C)}=\Gamma\!\bigl(K_{P_n},\mathrm{Pc}(E)\bigr) \longrightarrow 0.
\]
Combining the finite and Archimedean contributions yields the assertion.
\end{proof}
\begin{remark}
Height functions arising in arithmetic dynamics are naturally
determined by adelic Green functions (or equivalently by semipositive
adelic metrics). Uniform convergence of the associated Green functions
induces a natural topology on heights via pointwise convergence on
$\overline{\Q}$.

Since Theorem~\ref{main thm} yields uniform convergence
$g_{P_n}\to g_E$, the corresponding canonical heights
$\hat h_{P_n}$ converge to the Rumely height $h_E$ in this topology.
In particular the Corollary \ref{cor:height-approx} implies that Rumely heights associated to symmetric 
capacity-one regular compact sets lie in the closure of canonical dynamical
heights arising from integral polynomial dynamics.
\end{remark}


\begin{thebibliography}{99}

\bibitem{BH}
M.~H.~Baker and L.-C.~Hsia,
\newblock Canonical heights, transfinite diameters, and polynomial dynamics,
\newblock {\em J. Reine. Angew. Math.} \textbf{585} (2005), 61--92.


\bibitem{BE}
T.~Bayraktar and E.~Efe,
\emph{On dynamics of asymptotically minimal polynomials},
J.\ Approx.\ Theory \textbf{286} (2023), 105840.

\bibitem{Bilu}
Y.~Bilu,
\emph{Limit distribution of small points on algebraic tori},
Duke Math.\ J.\ \textbf{89} (1997), 465--476.

\bibitem{BG}
E.~Bombieri and W.~Gubler,
\newblock {\em Heights in Diophantine Geometry},
\newblock Cambridge Univ. Press, 2006.

\bibitem{CS}
G.~Call and J.~H.~Silverman,
\newblock Canonical heights on varieties with morphisms,
\newblock {\em Compositio Mathematica} \textbf{89} (1993), no.~2, 163--205.

\bibitem{CG}
L.~Carleson and T.~W.~Gamelin,
\newblock {\em Complex Dynamics},
\newblock Springer-Verlag, New York, 1993.


\bibitem{FeketeSzego}
M.~Fekete and G.~Szeg\H{o},
\emph{On algebraic equations with integral coefficients whose roots belong to a given point set},
Math.\ Z.\ \textbf{63} (1955), 158--172.

\bibitem{Klimek}
M.~Klimek,
\newblock Metrics associated with extremal plurisubharmonic functions,
\newblock {\em Proceedings of the American Mathematical Society} \textbf{123} (1995), no.~9, 2763--2770.


\bibitem{Landkof}
N.~S.~Landkof,
\newblock {\em Foundations of Modern Potential Theory},
\newblock Springer-Verlag, New York--Heidelberg, 1972.


\bibitem{Ransford}
T.~Ransford,
\newblock {\em Potential Theory in the Complex Plane},
\newblock Cambridge University Press, Cambridge, 1995.


\bibitem{RumelyBilu}
R.~Rumely,
\emph{On Bilu's Equidistribution Theorem},
in \emph{Spectral Problems in Geometry and Arithmetic},
Contemp.\ Math.\ \textbf{484}, AMS, 2009, 159--182.

\bibitem{Silverman2007}
J.~H.~Silverman,
\newblock {\em The Arithmetic of Dynamical Systems},
\newblock Graduate Texts in Mathematics, vol.~241,
\newblock Springer, New York, 2007.
\end{thebibliography}
\end{document}